\documentclass[12pt]{article}
\usepackage[utf8]{inputenc}
\usepackage[english]{babel}
\usepackage{amsmath,amssymb,amsthm,mathtools}
\usepackage{indentfirst}
\usepackage{tikz,float}
\usepackage[colorlinks=true,allcolors=blue]{hyperref}

\newtheorem{theorem}{Theorem}
\newtheorem{lemma}{Lemma}
\newtheorem{corollary}{Corollary}
\newcommand{\R}{\mathbb{R}}
\renewcommand{\P}{\mathbb{P}}
\newcommand{\eps}{\varepsilon}

\renewcommand{\leq}{\leqslant}
\renewcommand{\geq}{\geqslant}
\begin{document}
\title{An improved upper bound on the chromatic number of the $n$-dimensional sphere}
\author{A. I. Bikeev, V. A. Voronov, A. M. Raigorodskii}
\date{}
\maketitle

\begin{abstract}
    In this short note, we obtain upper bounds on the chromatic numbers of spheres using recent improvements in bounds for Euclidean space. We show that $\chi (S^n_{R}) \leq (\xi_R + o(1))^n$, where $\xi_R = \frac{3\sqrt 3}{2}(1-\frac{1}{4R^2}) $ and $\chi(S^n_R)$ denotes the chromatic number of the $n$-dimensional sphere of a fixed radius $R$.
\end{abstract}

\section{Introduction}

In this short note, we obtain upper bounds on the chromatic numbers of spheres and balls using recent improvements in bounds for Euclidean space. We also extend these bounds to forbidden intervals of distances and to sets of prescribed diameter. Recall that the chromatic number of a subset $A\subseteq\R^n$ is the minimum number $\chi(A)$ of colors required to color its points so that any two points at Euclidean distance exactly $1$ have different colors.

For a long time, the best asymptotic bounds for Euclidean space were
\begin{equation}\label{eq:1239}
 (1.239+o(1))^n\leq\chi(\R^n)\leq(3+o(1))^n.
\end{equation}
The lower bound is due to Raigorodskii~\cite{Rai00}, and the upper bound to Larman and Rogers~\cite{LR}. A recent paper by Davies~\cite{Davies} gives
\begin{equation}\label{eq:new}
 (2+o(1))^n\leq\chi(\R^n)\leq\left(\frac{3\sqrt3}{2}+o(1)\right)^n.
\end{equation}

Let $S_R^n=\{x\in\R^{n+1}:\|x\|=R\}$; in particular, $S_1^n$ is the unit sphere. All asymptotic statements below are for fixed $R$ as $n\to\infty$. Clearly, $\chi(S_R^{n-1})\leq\chi(\R^n)$, so \eqref{eq:1239} and \eqref{eq:new} also give upper bounds for spheres. Raigorodskii~\cite{Rai12} proved exponential growth for every fixed $R>1/2$, and the lower bound
\[
 \chi(S_R^{n-1})\geq\bigl(2q^q(1-q)^{1-q}+o(1)\bigr)^n,
 \qquad q=\frac{1}{8R^2},
\]
for $1/2<R<1/\sqrt2$. Kostina~\cite{Kostina} sharpened lower bounds using constant-composition polynomial constructions (see also a recent paper \cite{Raigorodskii_Sagdeev}). Covering by spherical caps gives the classical upper bound $(2R+o(1))^n$ for $\chi(S_R^n)$; see~\cite{Ro63, Naszodi,Prosanov} (a huge number of works are related to refinement of the infinitesimal term in the base of the exponent; see, for example, \cite{Ver, BW}). Prosanov~\cite{Prosanov} obtained the following improvement.

\begin{theorem}\label{thm:prosanov}
For every fixed $R>1/2$,
\begin{equation}\label{eq:prosanov}
 \chi(S_R^n)\leq(x(R)+o(1))^n,
\end{equation}
where
\begin{equation}\label{eq:x}
 x(R)=\begin{cases}
 \sqrt{5-\dfrac{2}{R^2}+4\sqrt{1-\dfrac{5}{4R^2}+\dfrac{1}{4R^4}}},
 &R>\dfrac{\sqrt5}{2},\\[4pt]
 2R,&\dfrac12<R\leq\dfrac{\sqrt5}{2}.
 \end{cases}
\end{equation}
\end{theorem}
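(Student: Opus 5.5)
For $\tfrac12<R\leq\tfrac{\sqrt5}{2}$ the bound is the classical cap-covering estimate. Take spherical caps of $S^n_R$ of Euclidean diameter slightly less than $1$, that is, angular radius $\varphi$ with $2R\sin\varphi=1-\eps$. Each such cap contains no pair at distance $1$, so it can receive one color. By the Rogers/B\"or\"oczky--Wintsche covering theorem, $S^n_R$ is covered by $(1/\sin\varphi)^{n+o(n)}=(2R+o(1))^n$ such caps. Letting $\eps\to0$ slowly with $n$ gives $\chi(S^n_R)\leq(2R+o(1))^n$.

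For $R>\tfrac{\sqrt5}{2}$ I would try to beat this by letting one color class be a union of caps rather than a single cap. The plan is to take a family $\mathcal F$ of caps of angular radius $\psi$, with $2R\sin\psi<1$. If the centers of two caps in $\mathcal F$ are at angular distance $\theta$ large enough that the minimal chordal distance $2R\sin\frac{\theta-2\psi}{2}$ between them exceeds $1$, those two caps can share a color. I also want the maximal distance $2R\sin\frac{\theta+2\psi}{2}$ to stay below $1$ when the caps are close, so there is a forbidden window of center separations. I would then do the following.
\begin{itemize}
\item[(a)] Cover $S^n_R$ by caps of radius $\psi$ with density $n^{O(1)}$, so that about $(\sin\psi)^{-n}$ caps are needed.
\item[(b)] Form the auxiliary graph on the cap centers, joining two centers when their angular separation lies in the forbidden window $[\theta_1,\theta_2]$.
\item[(c)] Bound the chromatic number of this auxiliary graph greedily, or by a random-partition/fractional-covering argument, by roughly the maximum degree. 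The degree is the number of centers in a spherical band $\{\theta_1\leq\angle\leq\theta_2\}$, which is about $(\sin\theta^*/\sin\psi)^n$, where $\theta^*$ is the point of the window closest to $\pi/2$.
\end{itemize}
Each color class of the auxiliary graph is then a legitimate color class on the sphere. This gives $\chi(S^n_R)\leq(\sin\theta^*/\sin\psi+o(1))^n$ up to polynomial factors.

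The main obstacle is the optimization in $\psi$ and the window endpoints. The window is determined by the constraints $2R\sin\frac{\theta_2-2\psi}{2}\geq1$ and $2R\sin\frac{\theta_1+2\psi}{2}\leq 1$, or rather by a more careful, possibly asymmetric, version of them. The key check is that for $R\leq\sqrt5/2$ the optimum degenerates to the single-cap choice $2R\sin\psi=1$, and for larger $R$ it produces the stated $x(R)$. I expect the threshold $\sqrt5/2$ to appear exactly where the square root in \eqref{eq:x} equals $2R$. Squaring $x(R)=2R$ leads to $4R^2-5+2/R^2=4\sqrt{\cdots}$, which one checks holds at $R^2=5/4$.

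If the naive max-degree bound in (c) does not produce $x(R)$, my first fallback is to replace caps by \emph{thin bands}, i.e.\ intersections of the sphere with slabs. This mirrors how Larman--Rogers-type constructions gain over plain ball packings. I would redo the volume computation there using the measure of a spherical band, which is $\approx(\sin\theta)^n$ up to polynomial factors, and optimize the two-parameter family.
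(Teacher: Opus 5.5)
\textbf{Verdict: genuine gap.} Your argument for $\tfrac12<R\le\tfrac{\sqrt5}{2}$, covering by caps of diameter $1-\eps$, is fine. For $R>\sqrt5/2$, however, the optimization you defer does not produce $x(R)$.

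Put $2R\sin(\alpha/2)=1$. Two caps of angular radius $\psi<\alpha/2$ conflict exactly when their centres are at angle in $[\alpha-2\psi,\alpha+2\psi]$, so no asymmetric refinement of the window is possible. With your own count, the degree bound in (c) has base $g(\psi)=\sin(\alpha+2\psi)/\sin\psi$ when $\alpha+2\psi<\pi/2$, and at least $1/\sin\psi>2R$ otherwise. The derivative $g'(\psi)$ has the sign of $\cos(\alpha+2\psi)\sin\psi-\sin(\alpha+\psi)<0$, so the best value is the limit $\psi\to\alpha/2$:
\[
\frac{\sin2\alpha}{\sin(\alpha/2)}=4\sqrt{1-\frac1{4R^2}}\Bigl(1-\frac1{2R^2}\Bigr).
\]
This is available only if $2\alpha<\pi/2$, i.e.\ $R>1/(2\sin(\pi/8))\approx1.31$. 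Consequently:
\begin{itemize}
\item On $(\sqrt5/2,\,1.31]$ you get nothing better than $2R$, although $x(R)<2R$ there.
\item As $R\to\infty$ your base tends to $4$, whereas
\[
x(R)=2\sqrt{1-\frac1{4R^2}}+\sqrt{1-\frac1{R^2}}\to3 .
\]
For example, at $R=2$ you get about $3.39$, against $x(2)\approx2.80$.
\end{itemize}
Counting centres of a covering inside a band actually requires the $\psi$-neighbourhood of the band, which only makes this worse.

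The failure is structural. At the optimum, a colour class consists of caps of diameter nearly $1$ whose centres are pairwise at chordal distance about $2$, because whole caps can be kept apart only through the triangle inequality. In the flat limit this is the familiar $4^n$ of ball-based constructions. The fallback with bands does not help either. A zone $\{\theta_1\le\angle(x,e)\le\theta_2\}$ contains an $(n-1)$-sphere of radius $R\sin\theta$ for each $\theta$ in the range. Hence it contains a unit distance unless it lies in a cap of diameter less than $1$, so it is no better a building block than a cap.

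The missing idea is separation by bisecting hyperplanes, which is what Prosanov's proof does, as the paper summarizes it:
\begin{itemize}
\item Take the spherical Voronoi cells of a well-spread set of centres.
\item Shrink each cell towards its centre so that its diameter is below $1$; the bisectors then keep shrunken cells of different centres more than $1$ apart.
\item Cover $S^n_R$ by random rotations of the union of the shrunken cells. This costs $n^{O(1)}$ times the reciprocal of the union's normalized measure.
\end{itemize}
Here the gap between neighbouring pieces comes from the bisector rather than from the triangle inequality. This is exactly the mechanism by which Larman and Rogers obtain $3$ rather than $4$ in $\R^n$, and its spherical version gives $x(R)$.

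Alternatively, the statement follows from the paper's Theorem~\ref{main}. Its hyperplane-trimmed overlapping caps (Davies's idea) give a base $\xi_R<x(R)$ for $R>\sqrt3/2$, and the base $2R=x(R)$ for $1/2<R\le\sqrt3/2$. Either way, some form of hyperplane trimming is indispensable. Without it, your plan proves only a weaker bound than \eqref{eq:prosanov}.
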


Prosanov's proof contracts spherical Voronoi cells and covers the sphere by rotated copies of the resulting set. The base $x(R)$ tends to $3$ as $R\to\infty$, corresponding to the Larman--Rogers bound in~\eqref{eq:1239}. We obtain a bound whose base tends to the constant in Davies's upper bound~\eqref{eq:new}.

Put
\[
 \xi_R=\frac{3\sqrt3}{2}\left(1-\frac{1}{4R^2}\right).
\]
The main result is the following.
\begin{theorem}\label{main}
For every fixed $R>1/2$,
\begin{equation}\label{eq:main}
 \chi(S_R^n)\leq
 \begin{cases}
 (\xi_R+o(1))^n, & R > \sqrt{3}/2\\
 (2R+o(1))^n, & 1/2 < R \leq \sqrt{3}/2
 \end{cases}
\end{equation}
\end{theorem}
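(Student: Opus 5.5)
The second line of \eqref{eq:main} is the classical cap-covering bound, so it needs no new argument. Take a spherical code on $S^n_R$ whose caps of chordal diameter slightly less than $1$ cover the sphere; there are $(2R+o(1))^n$ of them by Rogers' covering theorem~\cite{Ro63}. Give each cap its own colour. Two points in the same cap are at distance $<1$, so this is a proper colouring. The real work is the first line, for $R>\sqrt3/2$. Note that $\xi_R<2R$ exactly in the range where it is claimed, so it is the better bound there, which is a sanity check on the split.

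For the first line I would use the Larman--Rogers/Rogers covering framework on the sphere. Suppose we find a measurable set $X\subset S^n_R$ that contains no two points at distance $1$ and whose normalized measure is at least $(\xi_R+o(1))^{-n}$. By Rogers' lemma for coverings by rotations (equivalently, a random rotation argument, with a polynomial loss), $S^n_R$ is then covered by $\mathrm{poly}(n)\cdot(\xi_R+o(1))^{n}$ rotated copies of $X$. Colouring each point by the index of a copy containing it is proper. The task therefore reduces to constructing a dense distance-$1$-avoiding set on the sphere.

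To build $X$ I would transfer Davies' construction from \eqref{eq:new} to the sphere. Davies' set in $\R^n$ avoids distance $1$ and has density $(2/(3\sqrt3)+o(1))^n$. It is a periodic union of pieces, each of diameter $<1$, with pairwise separation $>1$. I would mimic the same geometry on $S^n_R$.
\begin{enumerate}
\item Points $x,y\in S^n_R$ satisfy $\|x-y\|=1$ exactly when their angle $\theta$ satisfies $\sin(\theta/2)=1/(2R)$.
\item Take a spherical code with the appropriate angular separation, and around each code point place a piece modelled on Davies' piece but measured in chordal distance.
\item Compute the density by comparing the measure of spherical caps, using the standard asymptotic $\sigma(\mathrm{cap}(\varphi))=(\sin\varphi+o(1))^n$, with the density of the code. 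This is Kabatiansky--Levenshtein/Rogers type counting, done only in the $e^{o(n)}$ sense.
\end{enumerate}
The factor $1-\frac1{4R^2}=h^2/R^2$, where $h=\sqrt{R^2-1/4}$ is the distance from the centre to the midpoint of a unit chord, should appear exactly from this cap-measure computation. It is what one gets by replacing Euclidean radii $r$ by the sines of the corresponding angles.

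The main obstacle is step 3: checking that the curvature distortion gives precisely the factor $(1-\frac1{4R^2})$ multiplying $3\sqrt3/2$, and not something weaker. I would first do the computation for Davies' extremal configuration written in terms of cap angles, optimizing the free radius parameter as Davies does. If the optimum degenerates (the piece wants to exceed a hemisphere-type constraint), one falls back to plain caps. That fallback is consistent with the threshold $R=\sqrt3/2$ where the two expressions cross.
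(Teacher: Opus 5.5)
\textbf{There is a genuine gap: the construction that actually produces $\xi_R$ is missing, and the route you sketch for it would not produce it.} Your outer framework matches the paper's. For $1/2<R\le\sqrt3/2$ you use Rogers' cap covering. For $R>\sqrt3/2$ you build a dense set on $S_R^n$ that avoids unit distance and cover the sphere by random rotations of it at polynomial cost.

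Davies' mechanism is not a spherical code with prescribed separation. The paper's construction is as follows.
\begin{itemize}
\item Take $N=\lceil 1/(2q)\rceil$ \emph{independent uniform} centers $u_i$, and caps of Euclidean radius $r_-=r(1-\eps)$ with $r=1/(2R)$.
\item Trim the $i$-th cap by the half-space conditions $Rv\cdot\frac{u_i-u_j}{\|u_i-u_j\|}>\frac{1+\eps}{2}$ for all $j\ne i$. This puts distinct pieces more than $1+\eps$ apart.
\item Bound the density using a new spherical estimate (Lemma~\ref{lemma2}): if $\angle(v,u)\le\arcsin r$ and $w$ is uniform, then $\P\bigl(v\cdot\frac{u-w}{\|u-w\|}\le r\bigr)\le\eta(r)^{n-1}$, where $\eta(r)=\frac{3\sqrt3}{2}r(1-r^2)$. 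It is proved by reducing to the first two coordinates of $w$ and maximizing $4t^3(2rh-t)/r^2$ at $t_*=3rh/2$.
\item Independence and a union bound give survival probability at least $1/2$. This yields measure at least $\Omega_n(\varphi_-)/(4q)$, hence $p^{-1}\le(\eta(r)/r+o(1))^n=(\xi_R+o(1))^n$.
\end{itemize}
With a separated deterministic code you have no comparable control over how many code points fall in the ($v$-dependent) killing region. You also never specify the trimming rule or carry out this estimate; step 3 is exactly the content of the theorem.

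\textbf{Your heuristics about where the answer comes from are off.}
\begin{itemize}
\item The cap measure contributes only $(r+o(1))^n$, exactly as in the flat case. The factor $1-r^2=1-\frac1{4R^2}$ comes from curvature in the killing-probability bound, not from replacing radii by sines.
\item The threshold $R=\sqrt3/2$ is not where $\xi_R$ stops beating $2R$. In fact $\xi_R/(2R)=\eta(r)<1$ for every $r\ne 1/\sqrt3$, and $\xi_R\to0$ as $R\to1/2$, so the formula is not even plausible below the threshold. Your ``sanity check'' is false.
\item The real reason for the threshold: once $r\ge1/\sqrt3$, the maximizer $t_*$ no longer lies in the branch $t\ge a=r/h$. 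Then $\max_t F=1$ and Lemma~\ref{lemma2} becomes vacuous.
\end{itemize}

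\textbf{Your covering reduction needs slack.} The claim that any measurable $X$ avoiding distance $1$ is covered by $\mathrm{poly}(n)/p$ rotations is false as stated: finitely many rotations of a closed nowhere dense set of positive measure cannot cover the sphere. You need $X$ to avoid a whole interval $[1-\eps,1+\eps]$ with $\eps$ polynomially small. Then cover an $\eps/4$-net by rotations and pass to open $\eps/3$-neighborhoods, as the paper does.
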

The base $\xi_R$ tends to $3\sqrt3/2$ as $R\to\infty$ and is smaller than $x(R)$ for every $R>\sqrt3/2$.

\medskip
\emph{Sketch of the proof.}
Following~\cite{Davies}, we choose independent random cap centers. The caps have diameter $1-\eps$ and may overlap. We trim them so that distances between distinct retained pieces exceed $1+\eps$. Their union $P$ therefore avoids every distance in $[1-\eps,1+\eps]$. We cover an $\eps/4$-net by sufficiently many random rotations of $P$. The covering succeeds with positive probability for a suitable number of copies.
The $\eps/3$-neighborhoods of these copies cover the sphere and avoid unit distance, giving a proper coloring. Spherical geometry requires an additional probability estimate, while compactness removes the need for the periodic construction used in Euclidean space.

Note that instead of \(\varepsilon/4\) and \(\varepsilon/3\), one can take any constants \(\delta_1, \delta_2\) satisfying \(0<\delta_1<\delta_2<\varepsilon/2\). This does not affect the  estimate.
\medskip

Let $B_R^n$ be the closed ball of radius $R$ in $\R^n$. Since, for fixed \(R\) and \(n \to \infty\), almost all of the volume of $B_R^n$ is concentrated near its boundary, it is quite natural that the asymptotics of the chromatic numbers of the ball can be estimated by the same formula.
\begin{corollary}\label{cor:1}
For every fixed $R>1/2$,
\[
 \chi(B_R^n)\leq
  \begin{cases}
 (\xi_R+o(1))^n, & R > \sqrt{3}/2\\
 (2R+o(1))^n, & 1/2 < R \leq \sqrt{3}/2
 \end{cases}
\]
\end{corollary}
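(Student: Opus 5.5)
We deduce Corollary~\ref{cor:1} from Theorem~\ref{main} by slicing the ball into thin concentric shells and coloring each shell with a fresh palette.

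\textbf{Step 1: shells with a sub-exponential number of layers.} Fix a small $\delta>0$ and split $B_R^n$ into the inner ball $B^n_{R_0}$ and shells $A_k=\{x: r_{k-1}<\|x\|\le r_k\}$, where $R_0=1/2-\delta$ and the radii $r_k$ increase from $R_0$ to $R$ in steps of at most $\delta$. The number of pieces is $O(R/\delta)$, which is bounded for fixed $\delta$. Giving each piece its own palette multiplies the color count only by this constant. Hence it suffices to bound the chromatic number of each piece by $(f(R)+o(1))^n$ up to an error in the base that vanishes as $\delta\to0$. Here $f(R)$ denotes the right-hand side base of the corollary.

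\textbf{Step 2: coloring a shell.} The inner ball $B_{R_0}^n$ has diameter $<1$, so one color suffices. For a shell $A=\{r'<\|x\|\le r\}$ with $r-r'\le\delta$, the plan is to reduce to a sphere by radial projection, but projection changes distances, so we need a tolerance. Recall the construction in the proof of Theorem~\ref{main}: each color class is a union of pieces of diameter $<1-\eps$ whose mutual distances exceed $1+\eps$, hence it avoids all distances in $[1-\eps,1+\eps]$. Color $x\in A$ by the color of $r x/\|x\|$ in such a coloring of $S^{n-1}_r$.

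Radial projection to radius $r$ from radius $\rho\in[r-\delta,r]$ changes the distance between two points by at most $O(\delta)$, together with a multiplicative factor $r/\rho$. Take $\delta$ small relative to $\eps$, for instance $\delta\le c\eps$ with $c$ depending on $R$. Then two points of $A$ at distance $1$ project to points at distance in $[1-\eps,1+\eps]$, so they receive different colors. Therefore
\[
\chi(A)\le\bigl(f(r)+o(1)\bigr)^n\le\bigl(f(R)+o(1)\bigr)^n,
\]
since $f$ is nondecreasing in $R$.

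\textbf{Main obstacle.} The delicate point is that Theorem~\ref{main} as stated only bounds $\chi$, whereas Step 2 needs a coloring avoiding a whole interval $[1-\eps,1+\eps]$ with the same base. I would extract this from the proof sketch, which indeed produces $\eps$-robust colorings, and check that the constant does not deteriorate: the base depends continuously on $\eps$ and $\to f(r)$ as $\eps\to0$.

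Strictly, the distortion estimate in Step 2 should be verified directly rather than assumed. For $\|x\|=\rho_1$ and $\|y\|=\rho_2$ in $[r-\delta,r]$, write the projections as $r x/\rho_1$ and $r y/\rho_2$. Then
\[
\bigl|\,\|rx/\rho_1-ry/\rho_2\|-\|x-y\|\,\bigr|\le |r/\rho_1-1|\,\|x\|+|r/\rho_2-1|\,\|y\|\le 2\delta,
\]
so $\delta=\eps/2$ suffices. Finally let $\eps\to0$ slowly with $n$, or take $\eps$ fixed and then let it tend to $0$ in the base. For shells with $r\le\sqrt3/2$ use the $(2r+o(1))^n$ bound, and for shells with $r>\sqrt3/2$ use $\xi_r\le\xi_R$. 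This yields the corollary.
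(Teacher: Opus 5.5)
Your argument is correct and is essentially the spherical-shell argument of Prosanov that the paper invokes without reproducing: slice $B_R^n$ into thin concentric shells, give each shell a fresh palette, color it by radial projection onto a sphere carrying a coloring robust to distances near $1$, and use monotonicity of the base. One slip is a constant: the final color classes in the proof of Theorem~\ref{main} are $\eps/3$-neighborhoods of copies of $P$, so they avoid only $[1-\eps/3,1+\eps/3]$, not $[1-\eps,1+\eps]$, and you need $\delta\le\eps/6$ rather than $\delta=\eps/2$ (your fixed-$\eps$ route then gives boundedly many shells, so no uniformity in the radius is needed).
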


The proof repeats the analogous spherical-shell argument of~\cite{Prosanov} in full, and we do not reproduce it.

\section{Notation and auxiliary statements}

Write $x\cdot y$ for the scalar product in $\R^{n+1}$. For $0\leq r\leq1/\sqrt3$, put
\[
 \eta(r)=\frac{3\sqrt3}{2}r(1-r^2).
\]
This function is increasing on the stated interval, with $\eta(0)=0$ and $\eta(1/\sqrt3)=1$. Write
\[
C(z,\varphi)=\{x\in S_1^n:\angle(x,z)\leq\varphi\}
\] 
for the cap centered at $z$. Here $r=\sin\varphi$ is the Euclidean radius of the boundary of a cap of angular radius $\varphi$ on the unit sphere; the corresponding radius on $S_R^n$ is $Rr$.

For fixed $v,u\in S_1^n$ and a uniformly distributed random point $w\in S_1^n$, define
\[
 A^r_{u,v}(w)=\left\{v\cdot\frac{u-w}{\|u-w\|}\leq r\right\}.
\]
This event means that $v$ fails the strict separating inequality associated with the cap centers $u$ and $w$; see the construction in Lemma~\ref{l:1} and Fig.~\ref{fig:lemma-geometry}. The event $w=u$ has probability zero and is omitted.

\begin{lemma}\label{lemma2}
Let $n\geq2$ and $0\leq r\leq1/\sqrt3$. If $v,u\in S_1^n$ satisfy $\angle(v,u)\leq\varphi=\arcsin r$, then
\begin{equation}\label{eq:probability}
 \P\bigl(A^r_{u,v}(w)\bigr)\leq\eta(r)^{n-1}.
\end{equation}
\end{lemma}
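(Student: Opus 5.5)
The plan is to show that the set of $w$ on which $A^r_{u,v}(w)$ occurs lies inside a single spherical cap $C(z,\psi)$ with $\psi\leq\pi/2$ and $\sin\psi\leq\eta(r)$. The bound \eqref{eq:probability} then follows from the standard estimate on the normalized measure of a cap:
\[
\sigma\bigl(C(z,\psi)\bigr)\leq \sin^{n-1}\psi \qquad (0\leq\psi\leq\pi/2).
\]
One can prove this estimate by projecting the cap orthogonally onto the hyperplane $z^\perp$ and comparing with the equatorial section. Alternatively, one can compare densities in the formula $\sigma(C(z,\psi))=c_n\int_0^\psi\sin^{n-1}t\,dt$ after normalizing by $c_n\int_0^{\pi/2}\sin^{n-1}t\,dt$. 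The exponent $n-1$ in the statement leaves slack here, so a crude version of this step suffices.

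To describe the bad set, parametrize chords from $u$. Write $w=u-t\,d$ with $\|d\|=1$ and $d\cdot u>0$. Then $\|w\|=1$ forces $t=2\,u\cdot d$, so $w=u-2(u\cdot d)d$. Hence $w\mapsto d=(u-w)/\|u-w\|$ is a bijection, up to a null set, between $S_1^n\setminus\{u\}$ and the open hemisphere $\{d:\ d\cdot u>0\}$. The event is simply $d\cdot v\leq r$. Its complement consists of the directions in the cap $\{d\cdot v>r\}$, of angular radius $\arccos r$ around $v$, intersected with the hemisphere. Since $\angle(u,v)\leq\varphi=\arcsin r$, this direction cap lies essentially inside the hemisphere $\{d\cdot u>0\}$.

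The bad $w$ are then the images of directions $d$ that either leave this cap or are close to the boundary $d\cdot u=0$. These give points $w$ clustered around the reflection region opposite to $v$ relative to $u$. Everything is invariant under rotations fixing $u$ and $v$, so the problem reduces to the $3$-dimensional subspace spanned by $u$, $v$ and $w$, where the bad region becomes a planar or $2$-dimensional computation. I will first check that the extremal configuration is $\angle(u,v)=\varphi$ exactly, by a monotonicity argument in the angle. I will then choose the center $z$ on the great circle through $u$ and $v$, presumably at the image point of the direction $d$ where $d\cdot v=r$ and $d$ lies in the plane of $u$ and $v$ on the far side.

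The main obstacle is the explicit optimization. It amounts to finding the smallest cap containing the image, under $d\mapsto u-2(u\cdot d)d$, of the region $\{d\cdot u>0,\ d\cdot v\leq r\}$, and verifying that its sine-radius is at most $\frac{3\sqrt3}{2}r(1-r^2)$. I expect the factor $r(1-r^2)$ to appear as $\sin\theta\cos^2\theta$-type expressions from the double-angle map $w=u-2(u\cdot d)d$. The constant $3\sqrt3/2$ would then be the normalization making the maximum equal to $1$ at $r=1/\sqrt3$, which explains the hypothesis $r\leq 1/\sqrt3$: it guarantees $\eta(r)\leq1$, so $\psi\leq\pi/2$ and the cap estimate applies. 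I would first try to locate the farthest boundary point of the bad region from $z$ by Lagrange multipliers in the $3$-dimensional picture, and fall back on a cruder enclosing cap if the exact minimal cap is awkward, since only the base $\eta(r)$ matters.
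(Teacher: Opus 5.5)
\textbf{Verdict: genuine gap.} The step that carries the whole content of the lemma is only announced, namely that the bad set fits into a cap $C(z,\psi)$ with $\sin\psi\le\eta(r)$. The hints you give for that step point the wrong way.

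The bad set is not concentrated opposite to $v$: it contains $v$ whenever $v\ne u$. On the great circle through $u$ and $v$ it is an arc of length $4\varphi$; for $\theta=\angle(u,v)=\varphi$ it runs from $u$ to the point at angle $4\varphi$ from $u$ towards $v$. Your candidate center, the image of an in-plane direction with $d\cdot v=r$, is an endpoint of this arc. Every cap with that center therefore has angular radius at least $4\varphi$, so for small $r$ you get $\sin\psi\approx4r$, worse than $\eta(r)\approx\frac{3\sqrt3}{2}r$.

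Nor can you fall back on a cruder cap. Let $e_2$ be the unit vector of $\operatorname{span}(u,v)$ orthogonal to $u$. For $\theta=\varphi$ the bad set contains points with $1-(w\cdot u)^2-(w\cdot e_2)^2=\eta(r)^2$. Since the bad set is invariant under rotations fixing $u$ and $v$, one checks that every cap $C(z,\psi)$ with $\psi\le\pi/2$ containing it has $\sin\psi\ge\eta(r)$. The bound is tight, so only an exact computation can work. Your cap-measure estimate is fine, though: the substitution $\sin t=\sin\psi\sin s$ even gives $\frac12\sin^n\psi$.

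Here is what is missing. Put $w_1=w\cdot u$ and $w_2=w\cdot e_2$. The event depends only on $(w_1,w_2)$, and for $\theta>0$ it reads $w_2\ge g_\theta(w_1):=\bigl(\cos\theta\,(1-w_1)-r\sqrt{2(1-w_1)}\bigr)/\sin\theta$. Since $g_\theta$ is convex, the corresponding region $D_\theta$ of the unit disk is convex. So the cap centered at the direction of the point of $D_\theta$ nearest the origin, with $\cos\psi$ equal to that distance, contains the bad set.

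That would make your route a proof, but the distance still has to be computed, and this is exactly the paper's argument. With $t=\sqrt{(1-w_1)/2}$, one first reduces to $\theta=\varphi$ by monotonicity. One then bounds $1-w_1^2-w_2^2$ by an explicit function of $t$ alone. Its maximum is attained at $t_*=\frac32 r\sqrt{1-r^2}$, on the branch $4t^3\bigl(2r\sqrt{1-r^2}-t\bigr)/r^2$, and equals $\frac{27}{4}r^2(1-r^2)^2=\eta(r)^2$.

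The paper then avoids caps altogether. It only needs the bad set to lie in $\{w_1^2+w_2^2\ge1-\eta(r)^2\}$. That set has measure exactly $\eta(r)^{n-1}$ because $w_1^2+w_2^2\sim\operatorname{Beta}(1,(n-1)/2)$, so no convexity is needed. Once completed, your route would give the slightly stronger $\frac12\eta(r)^n$, at the price of the extra convexity step.
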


\begin{proof}
\begin{figure}[H]
\centering
\begin{tikzpicture}[scale=1.25,>=stealth]
 \coordinate (O) at (0,0);
 \coordinate (U) at (0:2.3);
 \coordinate (V) at (18:2.3);
 \coordinate (W) at (95:2.3);
 \fill[blue!15] (-30:2.3) arc[start angle=-30,end angle=30,radius=2.3] -- cycle;
 \fill[orange!20] (65:2.3) arc[start angle=65,end angle=125,radius=2.3] -- cycle;
 \draw[gray] (O) circle[radius=2.3];
 \draw[blue!70!black,thick] (-30:2.3) arc[start angle=-30,end angle=30,radius=2.3];
 \draw[orange!80!black,thick] (65:2.3) arc[start angle=65,end angle=125,radius=2.3];
 \draw[blue!50!black,dashed] (-30:2.3) -- (30:2.3);
 \draw[orange!60!black,dashed] (65:2.3) -- (125:2.3);
 \draw[gray,dotted] (O) -- (30:2.3);
 \draw[->,thick] (O) -- (U) node[below right] {$u=e_1$};
 \draw[->,thick] (O) -- (V) node[above right] {$v$};
 \draw[->,thick] (O) -- (W) node[above left] {$w$};
 \draw[->,thick,purple] (W) -- (U)
   node[pos=0.46,above right] {$u-w$};
 \draw[->] (0:0.65) arc[start angle=0,end angle=18,radius=0.65];
 \node at (9:0.92) {$\theta$};
 \draw[->] (0:1.25) arc[start angle=0,end angle=30,radius=1.25];
 \node at (33:1.42) {$\varphi$};
 \draw[blue!50!black,dotted] (1.992,0) -- (2.85,0);
 \draw[blue!50!black,dotted] (1.992,1.15) -- (2.85,1.15);
 \draw[<->,blue!70!black] (2.85,0) -- (2.85,1.15)
   node[midway,right] {$r$};
 \fill (O) circle[radius=1.4pt] node[below left] {$O$};
 \node at (-1.75,-1.95) {$S_1^n$};
 \node[blue!70!black] at (3.4,-1.35) {$C(u,\varphi)$};
 \draw[blue!70!black,thin] (2.9,-1.15) -- (2.15,-0.55);
 \node[orange!70!black] at (-1.6,2.85) {$C(w,\varphi)$};
 \draw[orange!70!black,thin] (-1.15,2.62) -- (-0.8,2.12);
\end{tikzpicture}
\caption{Schematic planar section of the caps in Lemma~\ref{lemma2}.}
\label{fig:lemma-geometry}
\end{figure}
For $r=0$ the probability is zero, and for $r=1/\sqrt3$ the bound is immediate. Assume $0<r<1/\sqrt3$, and put $h=\sqrt{1-r^2}$ and $\theta=\angle(v,u)$. Choose coordinates with $u=e_1$ and $v=\cos\theta\,e_1+\sin\theta\,e_2$. Write $w_1,w_2$ for the first two coordinates of $w$, and put
\[
 t=\sqrt{\frac{1-w_1}{2}},\qquad \|u-w\|=2t.
\]
When $\theta>0$, the event $A^r_{u,v}(w)$ implies
\[
 w_2\geq2t\frac{t\cos\theta-r}{\sin\theta},
\]
and hence
\begin{equation}\label{eq:E}
 1-w_1^2-w_2^2\leq F(\theta,t)
 :=4t^2\left[1-t^2-
 \left(\frac{\max(t\cos\theta-r,0)}{\sin\theta}\right)^2\right].
\end{equation}
Here $F(\theta,t)$ is nondecreasing in $\theta$, and $F(\theta,t)\leq F(\varphi,t)$. With $a=r/h$,
\[
 F(\varphi,t)=\begin{cases}
 4t^2(1-t^2),&0\leq t\leq a,\\[2pt]
 4t^3(2rh-t)/r^2,&a\leq t\leq1.
 \end{cases}
\]
Since $a<1/\sqrt2$ and $a<3rh/2<1$, maximizing the two branches gives $t_*=3rh/2$ and 
\[
 \max_{0\leq t\leq1}F(\varphi,t)
 =F(\varphi,t_*)=\frac{27}{4}r^2(1-r^2)^2=\eta(r)^2.
\]
If $\theta=0$, the event is simply $t\leq r$, so
\[
 1-w_1^2-w_2^2\leq4t^2(1-t^2)
 \leq4r^2(1-r^2)\leq\eta(r)^2.
\]
Finally, $w_1^2+w_2^2$ has distribution $\operatorname{Beta}(1,(n-1)/2)$, giving
\[
 \P(w_1^2+w_2^2\geq1-s^2)=s^{n-1},\qquad0\leq s\leq1.
\]
In both cases $A^r_{u,v}$ implies $w_1^2+w_2^2\geq1-\eta(r)^2$. Substituting $s=\eta(r)$, we have~\eqref{eq:probability}.
\end{proof}

\begin{lemma}\label{l:1}
Fix $R>\sqrt3/2$ and put $\eps=n^{-2}$. For all sufficiently large $n$, there is a measurable set $P\subset S_R^n$ avoiding every distance in $[1-\eps,1+\eps]$ whose normalized surface measure $p$ satisfies
\[
 p^{-1}\leq(\xi_R+o(1))^n.
\]
\end{lemma}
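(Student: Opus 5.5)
Work on the unit sphere $S_1^n$ by rescaling: a set $P\subset S_R^n$ avoids distances in $[1-\eps,1+\eps]$ iff $P/R\subset S_1^n$ avoids $[(1-\eps)/R,(1+\eps)/R]$. The caps will have Euclidean diameter slightly below $1/R$. The factor $1-\frac1{4R^2}$ in $\xi_R$ is $\cos^2$ of the half-angle subtended by a unit chord on $S_R^n$. This suggests choosing $r$ so that $\eta(r)$ relates to $\xi_R$ through the cap measure, which is roughly $r^n$ when $2Rr\approx 1$.

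\textbf{Construction.} Put $r_0=\frac{1-2\eps}{2R}$, so a cap $C(z,\arcsin r_0)$ on $S_1^n$ scales to a cap on $S_R^n$ of Euclidean radius $Rr_0$ and diameter below $1-\eps$. The condition $R>\sqrt3/2$ gives $r_0<1/\sqrt3$, so Lemma~\ref{lemma2} applies. Choose $N$ independent uniform centers $w_1,\dots,w_N\in S_1^n$ and trim each cap $C_i=C(w_i,\varphi)$ to
\[
 C_i'=\Bigl\{v\in C_i:\ v\cdot\frac{w_i-w_j}{\|w_i-w_j\|}> r \ \text{for all } j\neq i\Bigr\}.
\]
Let $P_0=\bigcup_i C_i'$, and let $P$ be its image on $S_R^n$.

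\textbf{Distances inside and between pieces.} Each piece lies in a cap of diameter below $1-\eps$, so distances within a piece are fine. For $v\in C_i'$ and $v'\in C_j'$, project onto the unit vector $e=(w_i-w_j)/\|w_i-w_j\|$. Then $v\cdot e>r$, and symmetrically $v'\cdot(-e)>r$, so $\|v-v'\|>2r$. To obtain the strict margin $1+\eps$ after scaling, take the trimming threshold slightly larger than the cap radius: cap radius $r_0$ with diameter $\le(1-\eps)/R$, and threshold $r_1=\frac{1+\eps}{2R}$. The two differ by $O(\eps)=O(n^{-2})$, which will not affect exponential asymptotics. Lemma~\ref{lemma2} needs $\angle(v,u)\le\arcsin r$ with the same $r$, so I would apply it with $r=r_1$ to points of the smaller cap $C(u,\arcsin r_0)\subset C(u,\arcsin r_1)$. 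Then $\P(v\notin C_i'\mid v\in C_i)\le N\eta(r_1)^{n-1}$ by a union bound over $j$.

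\textbf{Measure estimate.} This step is the main point. By linearity of expectation and Fubini,
\[
 \mathbb E\,\mu(P_0)\ \ge\ \mathbb E\sum_i\mu(C_i')-\text{(nothing: the $C_i'$ are disjoint by the separation)}.
\]
The trimmed pieces are in fact pairwise disjoint, since distances between them exceed $2r>0$. Hence
\[
 \mathbb E\,\mu(P_0)\ge N\,\mu(C)\bigl(1-N\eta(r_1)^{n-1}\bigr),
\]
where $\mu(C)$ is the normalized cap measure. Choose $N=\lfloor \tfrac12\eta(r_1)^{-(n-1)}\rfloor$. Some realization then has
\[
 p\ge \tfrac14\,\mu(C)\,\eta(r_1)^{-(n-1)}.
\]

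The standard estimate $\mu(C(\cdot,\arcsin r_0))=(r_0+o(1))^n$ gives
\[
 p^{-1}\le\bigl(\eta(r_1)/r_0+o(1)\bigr)^n .
\]
Since $r_1/r_0=1+O(n^{-2})$ and $\eta(r)/r=\frac{3\sqrt3}{2}(1-r^2)$, we get $\eta(r_1)/r_0=\frac{3\sqrt3}{2}\bigl(1-\frac1{4R^2}\bigr)(1+O(n^{-2}))$. Polynomial and $(1+O(n^{-2}))^n$ factors are absorbed into $o(1)$, so $p^{-1}\le(\xi_R+o(1))^n$.

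The delicate points are bookkeeping rather than new ideas. First, the threshold/radius mismatch must be matched exactly to the hypothesis $\angle(v,u)\le\arcsin r$ of Lemma~\ref{lemma2}. Second, the upper distance bound must hold strictly: the projection argument gives $>2r_1R=1+\eps$ on $S_R^n$. Third, measurability of $P$ is clear, since it is a finite Boolean combination of closed caps and open half-spaces.
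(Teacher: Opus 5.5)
Your proposal is correct and follows essentially the same route as the paper. It uses random caps of radius slightly below $1/(2R)$ and trims them by half-spaces with threshold $(1+\eps)/(2R)$. It applies Lemma~\ref{lemma2} with the larger parameter on the smaller cap, takes a union bound with $N\approx\tfrac12\eta(r_1)^{-(n-1)}$ centers, and finishes with the cap-measure estimate. Two slips are only cosmetic: with $N=\lfloor\cdot\rfloor$ the constant $\tfrac14$ should read $\tfrac14(1-o(1))$, and you should check $r_1<1/\sqrt3$ rather than $r_0<1/\sqrt3$ (true for large $n$), since Lemma~\ref{lemma2} is applied with $r_1$.
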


\begin{proof}
Put
\[
 r=\frac{1}{2R},\qquad r_\pm=r(1\pm\eps),\qquad
 \varphi_-=\arcsin r_-.
\]
For all sufficiently large $n$, $r_+<1/\sqrt3$. By Lemma~\ref{lemma2}, for $v,u\in S_1^n$ with $\angle(v,u)\leq\varphi_-$ and uniform $w\in S_1^n$,
\[
 \P\left(v\cdot\frac{u-w}{\|u-w\|}\leq r_+\right)
 \leq q:=\eta(r_+)^{n-1}.
\]
Choose $N=\lceil1/(2q)\rceil$ independent uniform directions $u_1,\ldots,u_N\in S_1^n$. Retain the  pieces
\[
 P_i=\left\{Rv:\ v\in S_1^n,\ \angle(v,u_i)<\varphi_-,\quad
 Rv\cdot\frac{u_i-u_j}{\|u_i-u_j\|}>\frac{1+\eps}{2}
 \text{ for all }j\ne i\right\}.
\]
Within a piece, distances are less than $2R\sin\varphi_-=1-\eps$. For $x\in P_i$ and $y\in P_j$ with $i\ne j$, the two defining inequalities give
\[
 (x-y)\cdot\frac{u_i-u_j}{\|u_i-u_j\|}>1+\eps.
\]
Thus the pieces are disjoint and $P=\bigcup_iP_i$ avoids every distance in $[1-\eps,1+\eps]$~(Fig.~\ref{fig:trimmed-disks}).

\begin{figure}[htbp]
\centering
\begin{tikzpicture}[x=2.5cm,y=2.5cm,>=stealth]
 \node at (0,1.4) {Original disks};
 \node at (2.4,1.4) {$P=P_1\cup P_2\cup P_3$};
 \foreach \x/\y/\i in {-0.4/0/1,0.4/0/2,0/0.692820/3}{
   \fill[blue!10,fill opacity=0.6] (\x,\y) circle[radius=0.45];
   \draw[gray,dashed] (\x,\y) circle[radius=0.45];
   \fill (\x,\y) circle[radius=0.6pt] node[above right] {$u_{\i}$};
 }
 \draw[->,thick] (1.03,0.4) -- (1.37,0.4);
 \node[font=\small] at (1.2,0.6) {trim};
 \begin{scope}[shift={(2.4,0)}]
 \fill[orange!12] (-0.55,-0.48) rectangle (0.55,0.22);
 \foreach \x/\y in {-0.4/0,0.4/0,0/0.692820}{
   \draw[gray,dashed] (\x,\y) circle[radius=0.45];
   \fill[gray] (\x,\y) circle[radius=0.6pt];
 }
\begin{scope}
 \clip (-0.400000,0.000000) circle[radius=0.45];
 \clip (-2.000000,-2.000000) -- (-0.550000,-2.000000) -- (-0.550000,-0.086603) -- (-2.000000,0.750555) -- cycle;
 \fill[blue!45] (-0.400000,0.000000) circle[radius=0.45];
 \draw[blue!70!black,thick] (-0.400000,0.000000) circle[radius=0.45];
 \draw[blue!70!black,thick] (-2.000000,-2.000000) -- (-0.550000,-2.000000) -- (-0.550000,-0.086603) -- (-2.000000,0.750555) -- cycle;
\end{scope}
\begin{scope}
 \clip (0.400000,0.000000) circle[radius=0.45];
 \clip (0.550000,-2.000000) -- (2.000000,-2.000000) -- (2.000000,0.750555) -- (0.550000,-0.086603) -- cycle;
 \fill[blue!45] (0.400000,0.000000) circle[radius=0.45];
 \draw[blue!70!black,thick] (0.400000,0.000000) circle[radius=0.45];
 \draw[blue!70!black,thick] (0.550000,-2.000000) -- (2.000000,-2.000000) -- (2.000000,0.750555) -- (0.550000,-0.086603) -- cycle;
\end{scope}
\begin{scope}
 \clip (0.000000,0.692820) circle[radius=0.45];
 \clip (1.964102,2.000000) -- (-1.964102,2.000000) -- (0.000000,0.866025) -- cycle;
 \fill[blue!45] (0.000000,0.692820) circle[radius=0.45];
 \draw[blue!70!black,thick] (0.000000,0.692820) circle[radius=0.45];
 \draw[blue!70!black,thick] (1.964102,2.000000) -- (-1.964102,2.000000) -- (0.000000,0.866025) -- cycle;
\end{scope}
 \draw[orange!80!black,dashed] (-0.55,-0.51) -- (-0.55,0.22);
 \draw[orange!80!black,dashed] (0.55,-0.51) -- (0.55,0.22);
 \draw[gray,dotted] (-0.55,-0.51) -- (-0.55,-0.68);
 \draw[gray,dotted] (0.55,-0.51) -- (0.55,-0.68);
 \draw[<->] (-0.55,-0.63) -- (0.55,-0.63) node[midway,fill=white,inner sep=2pt] {$1+\varepsilon$};
 \node[blue!70!black] at (-0.98,-0.22) {$P_1$};
 \draw[blue!70!black,thin] (-0.88,-0.22) -- (-0.71,-0.16);
 \node[blue!70!black] at (0.98,-0.22) {$P_2$};
 \draw[blue!70!black,thin] (0.88,-0.22) -- (0.71,-0.16);
 \node[blue!70!black] at (0,1.02) {$P_3$};
 \end{scope}
\end{tikzpicture}
\caption{The union of trimmed caps.}
\label{fig:trimmed-disks}
\end{figure}

For fixed $Rv\in S_R^n$, condition on $u_i$ with $\angle(v,u_i)<\varphi_-$. The union bound gives conditional survival probability at least $1-(N-1)q\geq1/2$. If $\Omega_n(\varphi_-)$ denotes normalized cap measure, disjointness yields
\[
 \P(Rv\in P)\geq\frac{N\Omega_n(\varphi_-)}{2}
 \geq\frac{\Omega_n(\varphi_-)}{4q}.
\]
Integrating over $v$ shows that some realization has normalized measure at least $\Omega_n(\varphi_-)/(4q)$.

For $0<s<1$, spherical coordinates give
\[
 \Omega_n(\arcsin s)
 =\frac{\Gamma((n+1)/2)}{\sqrt\pi\,\Gamma(n/2)}
 \int_0^s\frac{z^{n-1}}{\sqrt{1-z^2}}\,dz
 \geq\frac{\Gamma((n+1)/2)}{n\sqrt\pi\,\Gamma(n/2)}s^n.
\]
The prefactor is $(1+o(1))/\sqrt{2\pi n}$ by Stirling's formula. Therefore the chosen realization, of measure $p$, satisfies
\[
 p^{-1}\leq O_R(\sqrt n)\left(\frac{\eta(r_+)}{r_-}\right)^n
 =\left(\frac{\eta(r)}{r}+o(1)\right)^n
 =(\xi_R+o(1))^n.
\]
\end{proof}

\section{Proof of Theorem~\ref{main}}

\begin{proof}
For $1/2 \leq R\leq \sqrt3\,t/2$, the required asymptotics easily follows from the classical Rogers's bound \cite{Ro63}.

Fix $R>\sqrt3/2$, and use $\eps=n^{-2}$ and the set $P$ from Lemma~\ref{l:1}. A maximal subset $T\subset S_R^n$ whose distinct points have distance greater than $\eps/4$ is an $\eps/4$-net. Comparing the disjoint ambient balls of radius $\eps/8$ centered at its points with the ball of radius $R+\eps/8$ gives
\[
 |T|\leq\left(1+\frac{8R}{\eps}\right)^{n+1},
 \qquad \log|T|=O_R(n\log n).
\]
Let $p$ be the measure of $P$, and put
\[
 k=\left\lceil\frac{\log|T|+1}{p}\right\rceil.
\]
Take $k$ independent rotations of $P$, sampled from the Haar measure on $SO(n+1)$. Each fixed point is covered with probability $p$ by each copy, so the probability that some point of $T$ is missed is at most
\[
 |T|(1-p)^k\leq|T|e^{-kp}\leq e^{-1}<1.
\]
Choose copies covering $T$, and replace each by its open $\eps/3$-neighborhood in $S_R^n$, using Euclidean distance. These neighborhoods cover the sphere. If two points in the same neighborhood had distance $1$, two points of the original copy would have distance in $(1-2\eps/3,1+2\eps/3)$, a contradiction. Assign each point the color of the first neighborhood containing it. This is a proper coloring, and
\[
 \chi(S_R^n)\leq k\leq O_R(n\log n)\,p^{-1}+1
 \leq(\xi_R+o(1))^n.
\]
\end{proof}

\section{Bounds for related coloring and partition problems}

For a subset $X$ of Euclidean space and $0<b\leq b'$, let
$\chi(X,[b,b'])$ be the minimum number of colors needed to color $X$ so that no color class contains two points whose distance belongs to $[b,b']$.

For $0<b\leq1$, let $\chi(n,b)$ be the least integer $k$ such that every set of diameter $1$ in $\R^n$ can be partitioned into at most $k$ sets of diameter strictly less than $b$. Define $\tilde\chi(n,b)$ similarly, requiring instead that each part avoids distance $b$. For $0<b\leq b'\leq1$, let $\tilde\chi(n,[b,b'])$ denote the corresponding quantity when all distances in $[b,b']$ are forbidden.

In the papers \cite{BR26} and \cite{BR2}, we give a detailed survey of the known asymptotic bounds on these quantities. See also \cite{Kup1} and \cite{Kup2} for some related results.

\subsection{Bounds for \texorpdfstring{$\chi(\R^n,[1,t])$}{Euclidean space}}

For every fixed $t\geq1$, the method of Larman and Rogers~\cite{LR} extends the upper bound in~\eqref{eq:1239} to
\begin{equation}\label{eq:2t1}
 \chi(\R^n,[1,t])\leq(2t+1+o(1))^n.
\end{equation}
Davies's construction~\cite{Davies} also extends to forbidden intervals.

\begin{theorem}\label{t:Davies_for_intervals}
For every fixed $t\geq1$,
\begin{equation}\label{eq:davies_for_intervals}
 \chi(\R^n,[1,t])\leq\left(\frac{3\sqrt3}{2}t+o(1)\right)^n.
\end{equation}
\end{theorem}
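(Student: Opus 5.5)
We adapt Davies's construction to the interval $[1,t]$, in the same way Lemma~\ref{l:1} adapted it to the sphere. The target is a measurable set $P\subset\R^n$, periodic or at least of well-defined positive upper density $p$, that avoids every distance in $[1,t]$ and satisfies $p^{-1}\leq(\frac{3\sqrt3}{2}t+o(1))^n$. Random translates of $P$ then cover $\R^n$, which gives the coloring.

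\emph{Construction of the pieces.} We take balls of radius $\rho=\frac12-\eps$, so that each piece has diameter less than $1$ and no single piece contains a forbidden distance. The centers $u_i$ are chosen as a Poisson (or uniform random in a large torus) point process of intensity $\lambda$. We trim ball $i$ against every other center $u_j$ by a slab condition modeled on the one in Lemma~\ref{l:1}:
\[
x\cdot\frac{u_i-u_j}{\|u_i-u_j\|}>c_{ij}+\frac{t+\eps}{2},
\]
where $c_{ij}$ is the projection of the midpoint $(u_i+u_j)/2$. For $x\in P_i$ and $y\in P_j$ this forces $(x-y)\cdot e>t+\eps$ along the unit vector $e$, so $\|x-y\|>t$. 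Pieces therefore never create distances in $[1,t]$.

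\emph{Survival probability.} We need the Euclidean analogue of Lemma~\ref{lemma2}. Fix $v$ with $\|v-u_i\|\leq\rho$, and let $u_j$ be a random center. Put $d=\|u_i-u_j\|$ and $e=(u_i-u_j)/d$. The point $v$ fails the condition only if $(v-u_i)\cdot e\leq (t-d)/2+O(\eps)$. This requires $d\lesssim t+1$, and it also requires the angle between $e$ and $v-u_i$ to be large relative to what $d$ allows. We bound the measure of the set of bad positions of $u_j$ by optimizing over $d$ together with the transverse component, as in the maximization of $F$ in Lemma~\ref{lemma2}. In that proof $t_*=3rh/2$ produced the factor $\eta(r)^2$. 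Here the scale $t$ replaces $1$, and the bad region should have volume $\leq(\frac{3\sqrt3}{4}t+o(1))^n\cdot\mathrm{vol}(B_1)$ up to polynomial factors. Equivalently, it is comparable to a ball of radius $\frac{3\sqrt3}{4}t$. Rescaling Davies's computation for $t=1$ should give exactly this.

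\emph{Density and covering.} We choose $\lambda$ so that the expected number of bad centers is at most $1/2$. Then each point of each ball survives with probability at least $1/2$. The density satisfies $p\geq\lambda\,\mathrm{vol}(B_\rho)/2$, so
\[
p^{-1}\lesssim \frac{(\tfrac{3\sqrt3}{4}t)^n}{(\tfrac12)^n}=\Bigl(\tfrac{3\sqrt3}{2}t\Bigr)^n
\]
up to subexponential factors. To pass to a periodic set, we run the construction on a torus $\R^n/L\mathbb Z^n$ with $L\gg t$, so that no distance in $[1,t]$ wraps around. We then cover an $\eps/4$-net of the torus by $k=O(n\log n)p^{-1}$ random translates and take $\eps/3$-neighborhoods, exactly as in the proof of Theorem~\ref{main}. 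This yields a proper coloring of the torus and hence, by periodicity, of $\R^n$. One point needs checking: a neighborhood must not create distances in $[1,t]$. The forbidden band for $P$ is $(1-\eps,t+\eps)$ once the slab margin and radius are adjusted by $\eps$, so this holds.

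\emph{Main obstacle.} The key step is the geometric estimate of the bad region, that is, the Euclidean interval version of Lemma~\ref{lemma2}. We must show that the optimization over the relative position of $u_j$ produces the radius $\frac{3\sqrt3}{4}t$, which scales linearly in $t$, rather than some mixture such as $(t+1)/2$. I would first reduce to a two-dimensional problem in the plane spanned by $v-u_i$ and $u_j-u_i$. I would then integrate the transverse $(n-2)$-dimensional volume and maximize the resulting cross-sectional function, mirroring the function $F(\varphi,t)$ in Lemma~\ref{lemma2}.
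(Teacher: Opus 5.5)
Your construction is the paper's up to a dilation by $t$. The paper shrinks Davies's balls to radius $(1-\eps)/(2t)$, keeps the slab separation $1+\eps$, and dilates by $t$ at the end; you keep radius $\frac12-\eps$ and widen the slab to $t+\eps$. The trimming, the density bound, the torus and the $\eps/3$-neighbourhood covering are fine.

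The gap is the step you call the main obstacle. The bound $\mathrm{vol}(K_a)\leq(\frac{3\sqrt3}{4}t+o(1))^n\mathrm{vol}(B_1)$, for the set $K_a$ of positions of $u_j$ that kill $v=u_i+a$, is only asserted (``should''). Yet it is the whole content of the theorem beyond Davies's machinery. Rescaling by itself does not reduce it to Davies's case: dividing by $t$ gives balls of radius about $\frac1{2t}$ with separation about $1$, which is not Davies's configuration.

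The paper closes this with one observation. Davies's killing-region bound $(\frac{3\sqrt3}{4}+o(1))^n\mathrm{vol}(B_1)$ for separation $1$ holds uniformly for every point within distance $\frac12$ of its centre. Hence it holds for all points of the smaller balls of radius $\frac1{2t}\leq\frac12$, and scaling back by $t$ multiplies the volume by $t^n$.

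Your planned direct computation would also work, and it is short:
\begin{itemize}
\item Put $w=u_j-u_i$ and $d=\|w\|$. The point is killed iff $a\cdot w\geq d(d-t-\eps)/2$.
\item If $d>t+\eps$, then $\|a\|\leq\frac12$ forces $\cos\angle(a,w)\geq s:=d-t-\eps$. So $s\leq1$, and the distance from $w$ to the line $\R a$ is at most $(t+\eps+s)\sqrt{1-s^2}$.
\item If $d\leq t+\eps$, that distance is at most $t+\eps$.
\item Hence $K_a$ lies in a cylinder of length $2(t+2)$ and radius $g_\eps=\max_{0\leq s\leq1}(t+\eps+s)\sqrt{1-s^2}$. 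Therefore $\mathrm{vol}(K_a)\leq O_t(\sqrt n)\,g_\eps^{\,n}\,\mathrm{vol}(B_1)$.
\item For $t\geq1$ we have $(t+s)\sqrt{1-s^2}\leq t(1+s)\sqrt{1-s^2}\leq\frac{3\sqrt3}{4}t$, which proves the theorem.
\end{itemize}
This route in fact gives the base $2\max_{0\leq s\leq1}(t+s)\sqrt{1-s^2}\leq 2t+\frac1t$. That is strictly smaller than $\frac{3\sqrt3}{2}t$ for $t>1$, so the paper's shortcut is quicker but loses something.

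Your worry about a radius like $(t+1)/2$ does not arise: for $a=0$ the killing region is already the ball of radius $t+\eps$.
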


\begin{proof}
In Davies's construction~\cite{Davies}, put $\eps=n^{-1/2}$ and replace the open balls of radius $r=(1-\eps)/2$ by balls of radius
$r_1=(1-\eps)/(2t)$. Keep the separating planes, which ensure that distinct retained pieces are at distance greater than $1+\eps$. The resulting union avoids $[(1-\eps)/t,1+\eps]$.

The same bound on the volume of the region of centers that remove a fixed point remains valid, since $r_1\leq r$. The lower bound on the probability that a fixed point is retained is therefore multiplied by $(r_1/r)^n=t^{-n}$. Use a finer periodic grid and enlarge the retained sets by $\eps/(3t)$. These neighborhoods avoid $[1/t,1]$ and cover the space with positive probability. Refining the grid by a fixed factor changes only the subexponential covering factor. Finally, dilate by $t$ to obtain a coloring avoiding $[1,t]$. This proves~\eqref{eq:davies_for_intervals}, including $t=1$.
\end{proof}

The bound~\eqref{eq:davies_for_intervals} is stronger than~\eqref{eq:2t1} for
$1\leq t<t_0$, where
\[
 t_0=\left(\frac{3\sqrt3}{2}-2\right)^{-1}=1.672027\ldots.
\]
The two bases agree at $t=t_0$, and~\eqref{eq:2t1} is stronger for $t>t_0$.

The reason why the linear coefficient of the parameter $t$ in the bounds (8) and (9) differs is as follows. The bound (8) is obtained from the classical argument of Larman and Rogers by contracting the cells of some Voronoi diagram, into which the space is partitioned, by a factor of $2t+1$ instead of $3$. Then the distances between resulting bodies and their diameters are in the ratio at least $t$. On the other hand, in the proof of (9), we contract the original balls of diameter $1$ by a factor of $t$.

\subsection{Bounds for \texorpdfstring{$\chi(B_R^n,[1,t])$}{balls}}\label{ssec:bnr}

In the paper \cite{BR2}, we generalized Prosanov's construction to obtain, for fixed $t\geq1$ and $R>t/2$,
\[
 \chi(B_R^n,[1,t])\leq(\kappa(R,t)+o(1))^n,
\]
where, writing $A=t^2+t$,
\begin{equation}\label{eq:brnt1}
 \kappa(R,t)=\begin{cases}
 \sqrt{1+A\left(2-\dfrac{A}{2R^2}
       +2\sqrt{1-\dfrac{2A+1}{4R^2}+\dfrac{A^2}{16R^4}}\right)},
       &R>\dfrac{\sqrt{2A+1}}2,\\[6pt]
 2R,&\dfrac t2<R\leq\dfrac{\sqrt{2A+1}}2.
 \end{cases}
\end{equation}
We obtain the following analogue of Theorem~\ref{main}.

\begin{theorem}\label{thm:ball-interval}
For fixed $t\geq1$ and $R>1/2$,
\begin{equation}\label{eq:brnt2}
 \chi(B_R^n,[1,t])\leq
 \begin{cases}
 \left(\frac{3\sqrt3}{2}t\left(1-\frac{t^2}{4R^2}\right)+o(1)\right)^n,& R>\sqrt3\,t/2,\\
 (2R+o(1))^n,& 1/2 \leq R\leq \sqrt3\,t/2
 \end{cases}
\end{equation}
\end{theorem}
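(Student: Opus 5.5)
We prove the spherical version first and then pass to the ball by the shell argument of Corollary~\ref{cor:1}. Concretely, we aim to show that for fixed $t\geq1$ and $R>\sqrt3\,t/2$,
\[
 \chi(S_R^{n},[1,t])\leq\left(\tfrac{3\sqrt3}{2}t\left(1-\tfrac{t^2}{4R^2}\right)+o(1)\right)^n .
\]
The small-radius case $1/2\leq R\leq\sqrt3\,t/2$ needs no new construction. There we cover $B_R^n$ by $(2R+o(1))^n$ sets of diameter less than $1$ (Rogers-type covering). Each such set avoids every distance $\geq1$, in particular every distance in $[1,t]$.

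\emph{The set $P$.} We modify Lemma~\ref{l:1} exactly as Theorem~\ref{t:Davies_for_intervals} modifies Davies's construction. Keep the separating hyperplanes at half-distance $(1+\eps)/2$ with threshold
\[
 r_+=\frac{1+\eps}{2R}.
\]
Shrink the caps to Euclidean radius $R r_1$ with
\[
 r_1=\frac{1-\eps}{2Rt},
\]
so each piece has diameter $<(1-\eps)/t$. The union $P$ then avoids $[(1-\eps)/t,1+\eps]$.

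The survival estimate must be redone, because Lemma~\ref{lemma2} couples the cap radius with the hyperplane threshold. We need $\P(A^{r_+}_{u,v}(w))\leq\eta(r_+)^{n-1}$ for $\angle(v,u)\leq\arcsin r_1$. Since $r_1\leq r_+$, the event only shrinks when $\theta$ decreases: $F(\theta,t)$ is nondecreasing in $\theta$. Hence the bound of Lemma~\ref{lemma2} applies verbatim, provided $r_+<1/\sqrt3$, i.e.\ $R>\sqrt3/2$.

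Repeating the counting with $\Omega_n(\arcsin r_1)\gtrsim r_1^n/\mathrm{poly}(n)$ gives
\[
 p^{-1}\leq\left(\frac{\eta(1/(2R))}{1/(2Rt)}+o(1)\right)^n=\left(t\,\xi_R+o(1)\right)^n .
\]
This is not yet the claimed base: it carries the factor $1-\frac{1}{4R^2}$ rather than $1-\frac{t^2}{4R^2}$, and it needs only $R>\sqrt3/2$.

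\emph{Rescaling, which I expect to be the main obstacle.} To obtain $1-t^2/(4R^2)$ I would dilate first. Coloring $S_R^n$ with $[1,t]$ forbidden is equivalent to coloring $S_{R/t}^n$ with $[1/t,1]$ forbidden. On the sphere of radius $R'=R/t$, build the pieces with separating threshold $1+\eps$, giving $r_+=1/(2R')=t/(2R)$. This requires $R'>\sqrt3/2$, i.e.\ $R>\sqrt3\,t/2$, which matches the case split. Take caps of diameter $(1-\eps)/t$, so the cap radius is $r_1=1/(2R't)=1/(2R)$.

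Lemma~\ref{lemma2} again applies because $r_1\leq r_+$. We get
\[
 p^{-1}\leq\left(\eta\!\left(\tfrac{t}{2R}\right)\Big/\tfrac{1}{2R}+o(1)\right)^n
 =\left(\tfrac{3\sqrt3}{2}t\left(1-\tfrac{t^2}{4R^2}\right)+o(1)\right)^n,
\]
which is exactly the claimed base. The delicate point is checking that the monotonicity of $F$ in $\theta$ really covers $\theta\leq\arcsin r_1<\arcsin r_+$, with $\eps=n^{-2}$ perturbations harmless.

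\emph{Covering and passage to the ball.} The covering step is as in the proof of Theorem~\ref{main}. Take an $\eps/(4t)$-net $T$ of $S_{R'}^n$, with $\log|T|=O(n\log n)$. Use $k=\lceil(\log|T|+1)/p\rceil$ Haar-random rotations of $P$, and enlarge each copy to its open $\eps/(3t)$-neighborhood. Each enlarged copy avoids $[1/t,1]$, since its internal distances lie outside $((1-\eps)/t+2\eps/(3t),\,1+\eps-2\eps/(3t))$, which contains $[1/t,1]$. Dilating back by $t$ gives the spherical bound.

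For the ball $B_R^n$, I would follow the shell argument of~\cite{Prosanov}, as in Corollary~\ref{cor:1}. Split $B_R^n$ into polynomially many thin shells of radii $\rho\leq R$. Color the small ball of radius about $\sqrt3\,t/2$ by the $(2R+o(1))^n$-type bound, which is subexponentially dominated. Color each shell using the bound for $S_\rho^{n-1}$ thickened radially, where the threshold is widened slightly via $\eps$. The base is increasing in $\rho$, so the maximum is attained at $\rho=R$. A fresh palette per shell costs only a polynomial factor; alternatively, reuse colors on shells at distance $>t$.
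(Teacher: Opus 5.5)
Your proposal is correct and is essentially the paper's proof. Your rescaled construction on $S_{R/t}^n$ uses caps of diameter $(1-\eps)/t$, separation $1+\eps$, and Lemma~\ref{lemma2} with threshold $t/(2R)<1/\sqrt3$ and cap ratio $1/(2R)\le t/(2R)$; after dilating by $t$ this is exactly the paper's modification of Lemma~\ref{l:1} (caps of diameter $1-\eps$, threshold $r_+'=rt$), and your small-$R$ case and passage to the ball follow the same Rogers and shell arguments. Your ``first attempt'' was not actually weaker: it is already this construction for the sphere of radius $Rt$, since $t\xi_R=\frac{3\sqrt3}{2}t\bigl(1-\frac{t^2}{4(Rt)^2}\bigr)$, and you only overlooked that the final dilation changes the radius.
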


\begin{proof}
For $1/2 \leq R\leq \sqrt3\,t/2$, this result easily follows from the classical Rogers's bound \cite{Ro63}.
    Let $R > \sqrt{3}\,t/{2}$. In the proof of Lemma \ref{l:1}, replace $r_+ = r(1+\eps)$ by $r_+' = rt$. Since $1 \leq t < \frac{2R}{\sqrt3}$, the condition $r_+' < 1/\sqrt3$ is satisfied, and Lemma \ref{lemma2} remains applicable. The condition $R > \frac{\sqrt{3}}{2}$ also holds. Substituting $\frac{1}{2}$ instead of $\frac{1+\eps}{2}$ in the definition of the set $P_i$, we obtain a set that avoids every distance in $[1-\eps, t]$. Since $\eta(r_+') = \frac{3\sqrt3}{2}rt(1-t^2r^2)$, then the required asymptotics follows.
\end{proof}

Since $\chi(B_R^n,[1,t])\leq\chi(\R^n,[1,t])$, the bounds
\eqref{eq:2t1} and~\eqref{eq:davies_for_intervals} also apply to balls.
For fixed $t$, the bases in~\eqref{eq:brnt1} and~\eqref{eq:brnt2} tend to
$2t+1$ and $3\sqrt3\,t/2$, respectively, as $R\to\infty$.
Their relative strength can depend on $R$: for example, when $t=2$,
\eqref{eq:brnt2} is stronger near $R=\sqrt3$, whereas~\eqref{eq:brnt1} is stronger for sufficiently large $R$. Figure~\ref{fig:int_balls} compares the bases within their stated domains.

\subsection{Bounds for \texorpdfstring{$\tilde\chi(n,b)$}{one forbidden distance}}

In the papers \cite{BR26} and \cite{BR2}, we proved that, for every fixed $b\in(0,1]$,
\begin{equation}\label{eq:tilde_chi_n_b_1}
 \tilde\chi(n,b)\leq\chi(n,b)\leq
 \left(\sqrt{\frac{1}{b^2}+\frac12}+o(1)\right)^n.
\end{equation}
By Jung's theorem, every set of diameter $1$ in $\R^n$ is contained in a ball of radius $\sqrt{n/(2n+2)}<1/\sqrt2$. Scaling by $1/b$ and applying Corollary~\ref{cor:1} and the classical result of Rogers \cite{Ro63} gives
\begin{equation}\label{eq:tilde_chi_n_b_2}
 \tilde\chi(n,b)\leq\chi(B_{1/(b\sqrt2)}^n)\leq
 \begin{cases}
 \left(\dfrac{3\sqrt3}{2}\left(1-\dfrac{b^2}{2}\right)+o(1)\right)^n,
       &0<b<\sqrt{2/3},\\[6pt]
 \left(\dfrac{\sqrt2}{b}+o(1)\right)^n,
       &\sqrt{2/3}\leq b\leq1.
 \end{cases}
\end{equation}
The bases in~\eqref{eq:tilde_chi_n_b_1} and~\eqref{eq:tilde_chi_n_b_2} agree at $b_0=0.449151\ldots$, the unique solution in $(0,\sqrt{2/3})$ of
\[
 \frac{3\sqrt3}{2}\left(1-\frac{b^2}{2}\right)
 =\sqrt{\frac{1}{b^2}+\frac12}.
\]
As a bound on $\tilde\chi(n,b)$, \eqref{eq:tilde_chi_n_b_2} is stronger for $0<b<b_0$, and~\eqref{eq:tilde_chi_n_b_1} is stronger for $b_0<b\leq1$; see Figure~\ref{fig_1_dist}.

\subsection{Bounds for \texorpdfstring{$\tilde\chi(n,[b,b'])$}{forbidden intervals}}

For fixed $0<b\leq b'\leq1$, the bound~\eqref{eq:tilde_chi_n_b_1} gives
\begin{equation}\label{eq:int_bl}
 \tilde\chi(n,[b,b'])\leq\chi(n,b)\leq
 \left(\sqrt{\frac{1}{b^2}+\frac12}+o(1)\right)^n.
\end{equation}
Combining~\eqref{eq:brnt1} with Jung's theorem gives the following estimate.

\begin{theorem}
For fixed $0<b\leq b'\leq1$ and $t=b'/b$,
\[
 \tilde\chi(n,[b,b'])\leq(c(b,t)+o(1))^n,
\]
where, with $A=t^2+t$,
\begin{equation}\label{eq:int_pros}
 c(b,t)=\begin{cases}
 \sqrt{1+A\left(2-b^2A
       +2\sqrt{1-\dfrac{b^2(2A+1)}2+\dfrac{b^4A^2}4}\right)},
       &0<b<\dfrac{\sqrt2}{\sqrt{2A+1}},\\[6pt]
 \dfrac{\sqrt2}{b},
       &\dfrac{\sqrt2}{\sqrt{2A+1}}\leq b\leq\dfrac1t.
 \end{cases}
\end{equation}
\end{theorem}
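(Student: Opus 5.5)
The proof is a scaling argument: reduce via Jung's theorem to a ball, scale so the forbidden interval becomes $[1,t]$, and then apply the ball bound~\eqref{eq:brnt1}.

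\emph{Reduction.} Let $X\subset\R^n$ have diameter $1$. By Jung's theorem, $X$ lies in a ball of radius $\sqrt{n/(2n+2)}<1/\sqrt2$. Dilate by the factor $1/b$. The image $X/b$ lies in a ball of radius less than $R:=1/(b\sqrt2)$, and partitioning $X$ so that no part contains a distance in $[b,b']$ is the same as colouring $X/b$ so that no colour class contains a distance in $[1,b'/b]=[1,t]$. Since $\chi(\cdot,[1,t])$ is monotone under inclusion,
\[
 \tilde\chi(n,[b,b'])\leq\chi\bigl(B^n_{1/(b\sqrt2)},[1,t]\bigr).
\]
The bound~\eqref{eq:brnt1} requires $R>t/2$, that is, $b<\sqrt2/t$. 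This holds because $b\leq b'/t\leq 1/t<\sqrt2/t$.

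\emph{Substitution.} Put $R=1/(b\sqrt2)$ into $\kappa(R,t)$. Since $1/R^2=2b^2$, we get
\[
 \frac{A}{2R^2}=b^2A,\qquad \frac{2A+1}{4R^2}=\frac{b^2(2A+1)}{2},\qquad \frac{A^2}{16R^4}=\frac{b^4A^2}{4}.
\]
This yields exactly the first line of~\eqref{eq:int_pros}. The threshold $R>\sqrt{2A+1}/2$ becomes $b<\sqrt2/\sqrt{2A+1}$. The second branch $2R=\sqrt2/b$ applies when $\sqrt2/\sqrt{2A+1}\leq b$, and the constraint $b\leq 1/t$ is automatic. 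One should check that $\sqrt2/\sqrt{2A+1}\leq 1/t$, i.e.\ $2t^2\leq 2t^2+2t+1$, so both ranges together cover all admissible $b$.

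\emph{Main obstacle.} There is no real difficulty; the argument is bookkeeping. The one point needing care is that Jung's radius depends on $n$. The fixed ball $B^n_{1/(b\sqrt2)}$ contains every scaled set, so the bound with fixed $R$, for which the $o(1)$ term in~\eqref{eq:brnt1} is uniform, applies directly.
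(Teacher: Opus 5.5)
Your proposal is correct and matches the paper's proof: you enclose the set in a ball of radius $1/\sqrt2$ by Jung's theorem, scale by $1/b$, and apply \eqref{eq:brnt1} with $R=1/(b\sqrt2)>t/2$, so that $c(b,t)=\kappa(1/(b\sqrt2),t)$. Your explicit checks are all accurate and fill in bookkeeping the paper leaves implicit: $R>t/2$ follows from $b=b'/t\leq1/t$, the substitution $1/R^2=2b^2$ gives the first branch, and $\sqrt2/\sqrt{2A+1}\leq1/t$ ensures the two branches cover all admissible $b$.
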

\begin{proof}
Contain the set in a ball of radius $1/\sqrt2$ and scale by $1/b$.
Then apply~\eqref{eq:brnt1} with $R=1/(b\sqrt2)>t/2$ to obtain
$c(b,t)=\kappa(1/(b\sqrt2),t)$.
\end{proof}

Similarly, Theorem~\ref{thm:ball-interval} gives the following bound.

\begin{theorem}
For fixed $0<b\leq b'\leq 1$, put $t=b'/b$. Then
\begin{equation}\label{eq:int_new}
 \tilde\chi(n,[b,b'])\leq\chi(B_{1/\sqrt2}^n,[b,b'])\leq
 \begin{cases}
 \left(\frac{3\sqrt3}{2}t\left(1-\frac{t^2b^2}{2}\right)+o(1)\right)^n, & t \leq \sqrt{2/3},
 \\
 (\sqrt{2}/b + o(1))^n, & \sqrt{2/3} < t \leq 1.
 \end{cases}
\end{equation}
\end{theorem}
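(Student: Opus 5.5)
The plan is to follow the proof of the preceding theorem, which gives~\eqref{eq:int_pros}, with \eqref{eq:brnt1} replaced by Theorem~\ref{thm:ball-interval}. The first inequality, $\tilde\chi(n,[b,b'])\leq\chi(B_{1/\sqrt2}^n,[b,b'])$, comes from Jung's theorem. Every set of diameter $1$ in $\R^n$ lies in a ball of radius $\sqrt{n/(2n+2)}<1/\sqrt2$, so after a translation it is a subset of $B_{1/\sqrt2}^n$. Any coloring of the ball with no color class containing a distance in $[b,b']$ restricts to an admissible partition of the set, and the number of parts is at most the number of colors.

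Next, scale by $1/b$. The distance interval $[b,b']$ becomes $[1,t]$ with $t=b'/b\geq1$, and the ball becomes $B_R^n$ with $R=1/(b\sqrt2)$. Hence
\[
 \chi(B_{1/\sqrt2}^n,[b,b'])=\chi(B_{1/(b\sqrt2)}^n,[1,t]).
\]
Since $b\leq1$, we have $R\geq1/\sqrt2>1/2$, so Theorem~\ref{thm:ball-interval} applies with $t$ fixed. Substitute $R^2=1/(2b^2)$, so that $t^2/(4R^2)=t^2b^2/2$. The first case of~\eqref{eq:brnt2} then gives the base $\frac{3\sqrt3}{2}t\bigl(1-\frac{t^2b^2}{2}\bigr)$, and the second case gives $2R=\sqrt2/b$. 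These are exactly the two bases in~\eqref{eq:int_new}.

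The only real work is translating the case condition. The first case of~\eqref{eq:brnt2} requires $R>\sqrt3\,t/2$, which becomes $1/(b\sqrt2)>\sqrt3\,t/2$, that is, $tb<\sqrt{2/3}$. The natural reading of the threshold in~\eqref{eq:int_new} is therefore $tb=b'\leq\sqrt{2/3}$ rather than $t\leq\sqrt{2/3}$, and I would state the case split in that form. At the equality $tb=\sqrt{2/3}$ the two bases coincide, since $\frac{3\sqrt3}{2}t\cdot\frac23=\sqrt3\,t=\sqrt2/b$, so assigning the boundary point to either case is harmless. The second case of~\eqref{eq:brnt2} requires $R\geq1/2$, which holds because $b\leq1$.

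I expect no serious obstacle. Every $o(1)$ term is inherited unchanged from Theorem~\ref{thm:ball-interval}, because $R$ and $t$ are fixed as $n\to\infty$.
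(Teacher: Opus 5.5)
Your proposal is correct and matches the paper's proof. You use Jung's theorem for the first inequality, then scale by $1/b$ and apply Theorem~\ref{thm:ball-interval} with $R=1/(b\sqrt2)$, and you correctly read the threshold as $b'=tb\leq\sqrt{2/3}$ (the printed condition $t\leq\sqrt{2/3}$ is vacuous since $t\geq1$), which is exactly how the paper splits the cases, with the boundary $b'=\sqrt{2/3}$ handled by the second (diameter-less-than-$b$) case, where the two bases coincide.
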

\begin{proof}
For $b'<\sqrt{2/3}$, scale by $1/b$ and apply Theorem~\ref{thm:ball-interval} with $R=1/(b\sqrt2)$. At $b'=\sqrt{2/3}$, the displayed base equals $\sqrt2/b$, and the bound follows from a partition of $B_{1/\sqrt2}^n$ into sets of diameter less than $b$.
\end{proof}

Figure~\ref{fig:int} compares the bases in~\eqref{eq:2t1},
\eqref{eq:davies_for_intervals}, \eqref{eq:int_bl}, \eqref{eq:int_pros}, and~\eqref{eq:int_new}, with either $b$ or $t$ fixed. The Euclidean bounds apply after scaling by $1/b$. Depending on the parameters, any of~\eqref{eq:int_bl}, \eqref{eq:int_pros}, and~\eqref{eq:int_new} can give the smallest of these bases.

\section{Statement of AI use}
We used GPT-6.1 Sol to obtain the main steps of the proof of Theorem~\ref{main}. The authors independently verified the proof and simplified some steps.

\clearpage
\appendix
\section{Comparison of the bounds}

\begin{figure}[htbp]
    \centering
    \begin{tabular}{cc}
        \includegraphics[width=0.45\textwidth]{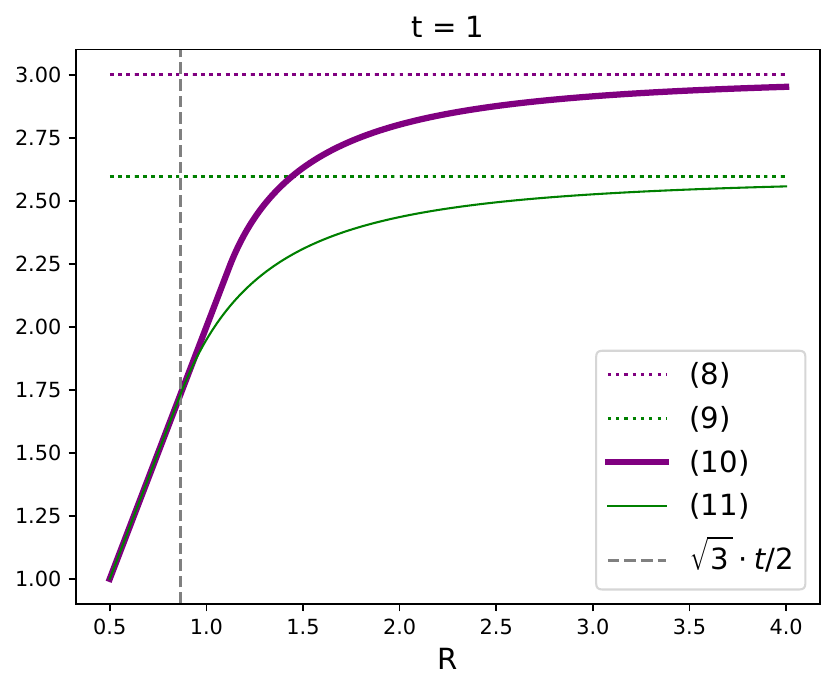} &
        \includegraphics[width=0.45\textwidth]{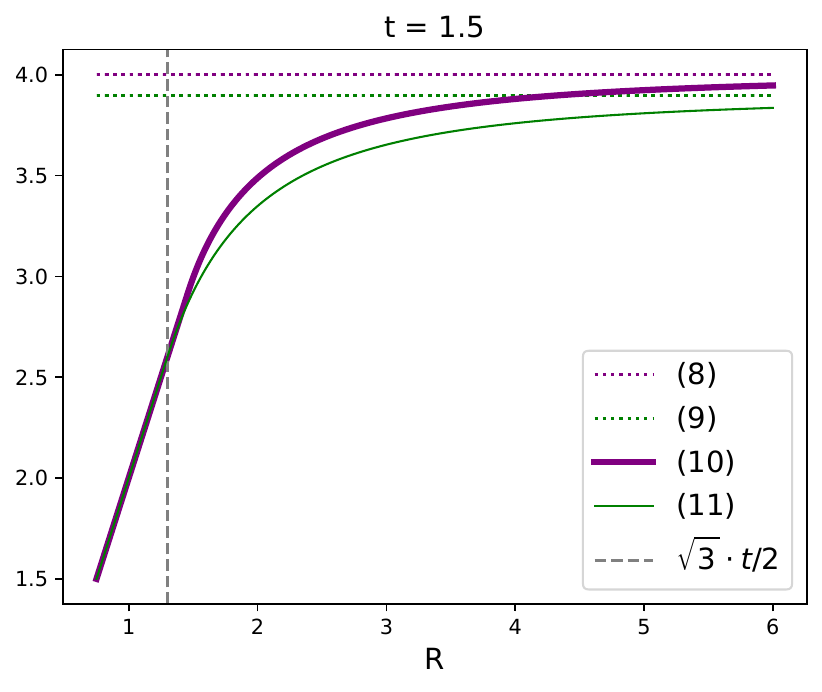} \\[2mm]
        \includegraphics[width=0.45\textwidth]{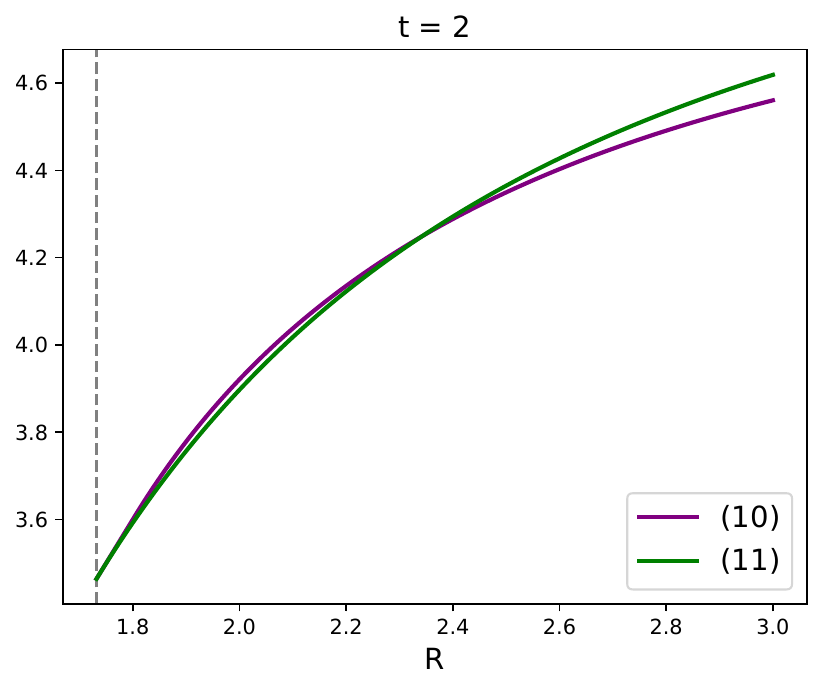} &
        \includegraphics[width=0.45\textwidth]{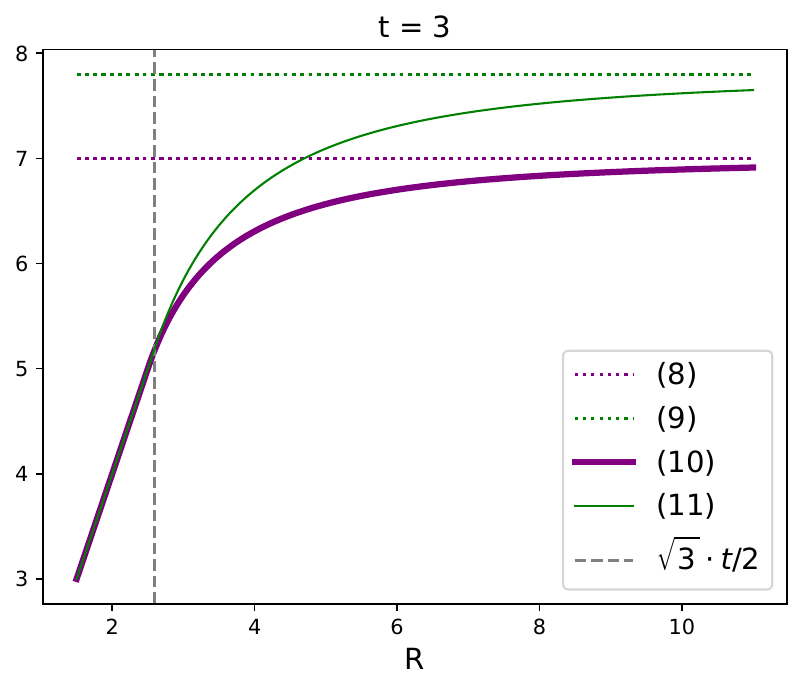} \\
    \end{tabular}
    \caption{Exponential bases in the upper bounds~\eqref{eq:2t1}, \eqref{eq:davies_for_intervals}, \eqref{eq:brnt1}, and~\eqref{eq:brnt2} for $\chi(B_R^n,[1,t])$. }
    \label{fig:int_balls}
\end{figure}

\begin{figure}[htbp]
\centering
\includegraphics[width=0.65\textwidth]{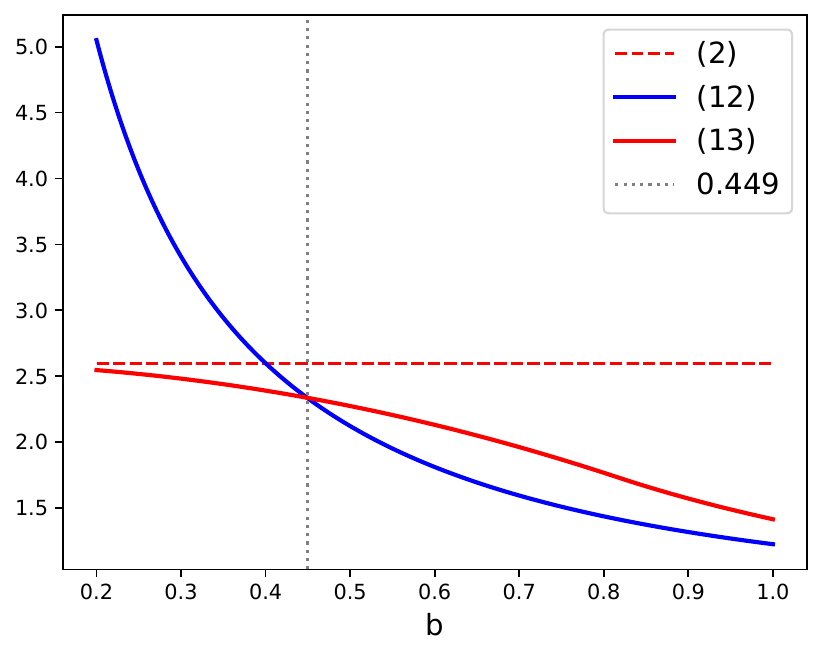}
\caption{Exponential bases in the upper bounds~\eqref{eq:new}, \eqref{eq:tilde_chi_n_b_1}, and~\eqref{eq:tilde_chi_n_b_2} for $\tilde\chi(n,b)$. }
\label{fig_1_dist}
\end{figure}

\begin{figure}[htbp]
    \centering
    \begin{tabular}{cc}
        \includegraphics[width=0.45\textwidth]{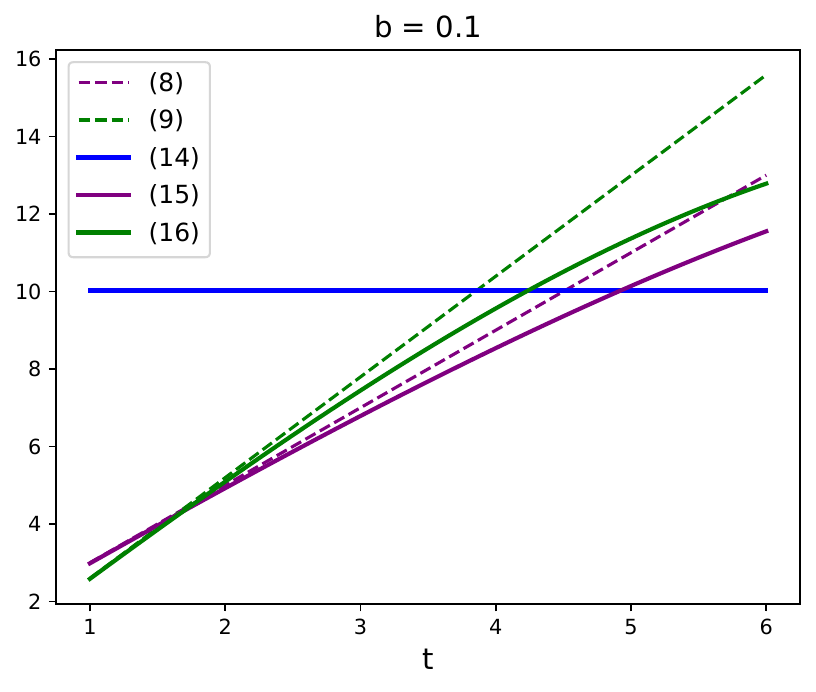} &
        \includegraphics[width=0.45\textwidth]{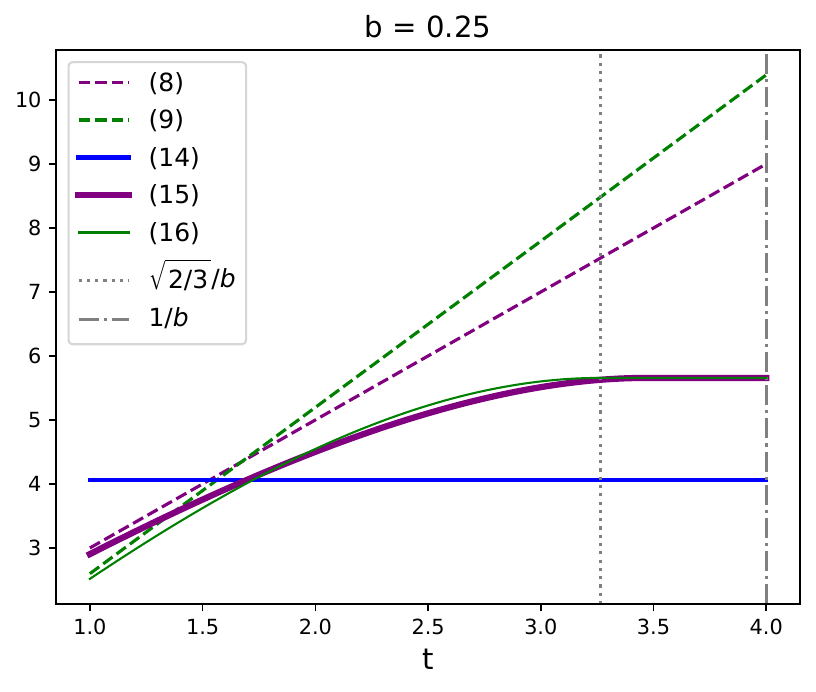}
        \\[2mm]
        \includegraphics[width=0.45\textwidth]{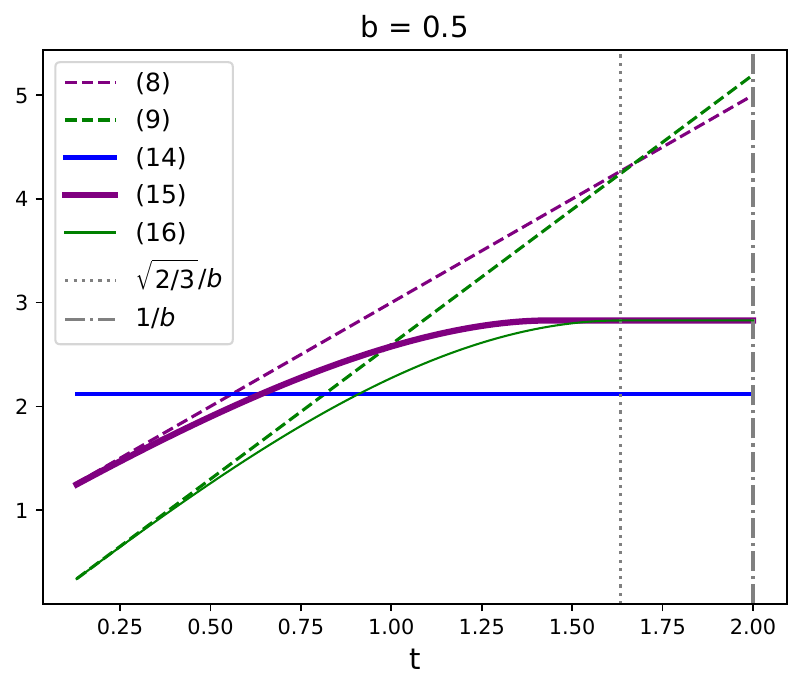}
        &
        \includegraphics[width=0.45\textwidth]{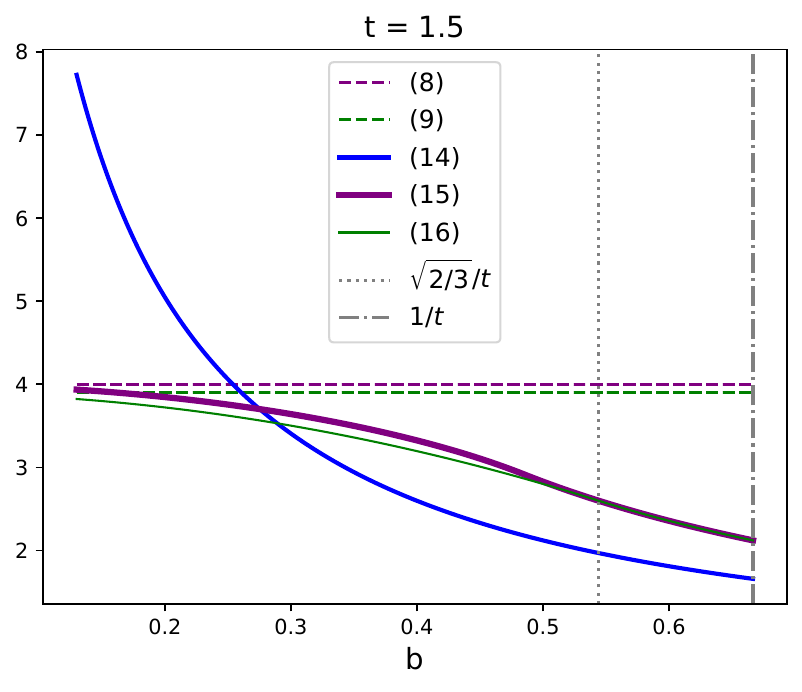}
        \\[2mm]
        \includegraphics[width=0.45\textwidth]{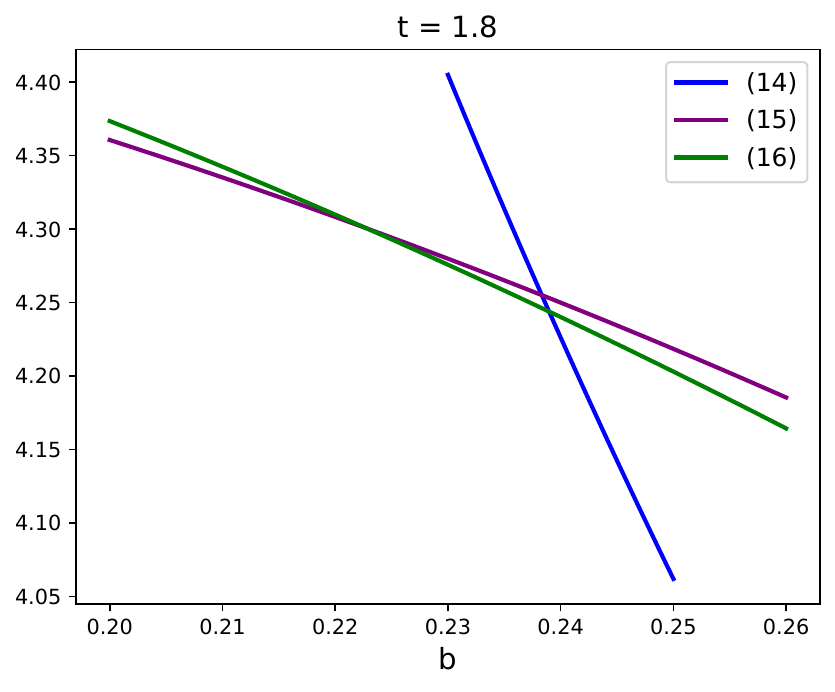}
        &
        \includegraphics[width=0.45\textwidth]{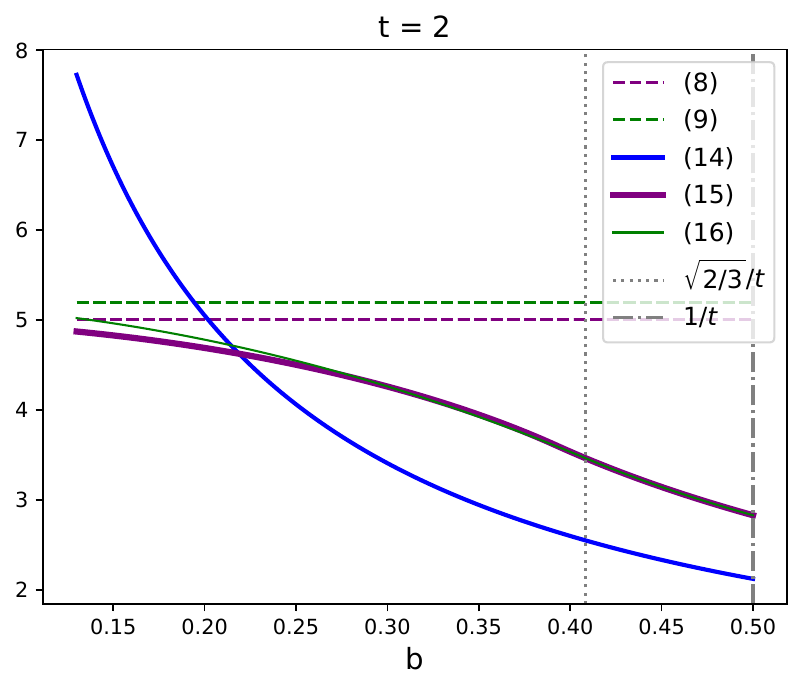}
        \\
    \end{tabular}
    \caption{Exponential bases in the upper bounds~\eqref{eq:2t1}, \eqref{eq:davies_for_intervals}, \eqref{eq:int_bl}, \eqref{eq:int_pros}, and~\eqref{eq:int_new} for $\tilde\chi(n,[b,bt])$.}
    \label{fig:int}
\end{figure}
\end{document}